\theoremstyle{plain}
\newtheorem{satz}{Theorem}[section]
\newtheorem{lem}[satz]{Lemma}
\newtheorem{prop}[satz]{Proposition}
\theoremstyle{definition}
\newtheorem{bsp}[satz]{Example}
\newtheorem{frage}[satz]{Question}
\newtheorem*{Frage}{Question}
\theoremstyle{remark}
\newtheorem{bem}[satz]{Remark}
\newcommand{\maxk}[1]{\left\{#1\right\}}
\newcommand{\erz}[1]{\langle#1\rangle}
\DeclareMathOperator{\height}{ht}
\begin{document}

\title[Natural Cohen-Macaulayfication of simplicial affine semigroup rings]{Natural Cohen-Macaulayfication of\linebreak simplicial affine semigroup rings}
\author{Max Joachim Nitsche}
\address{Max\mbox{\;}Planck\mbox{\;}Institute\mbox{\;}for\mbox{\;}Mathematics\mbox{\;}in\mbox{\;}the\mbox{\;}Sciences,\mbox{\;}Inselstrasse\mbox{\;}22,\mbox{\;}04103\mbox{\;}Leipzig,\mbox{\;}Germany}
\email{nitsche@mis.mpg.de}
\thanks{}
\date{\today}
\keywords{Simplicial affine semigroup rings, Cohen-Macaulay property, monomial ideals.}

\subjclass[2010]{Primary 13H10.}

\begin{abstract}

Let $K$ be a field, $B$ a simplicial affine semigroup, and $C(B)$ the corresponding cone. We will present a decomposition of $K[B]$ into a direct sum of certain monomial ideals, which generalizes a construction by Hoa and St\"uckrad. We will use this decomposition to construct a semigroup $\tilde B$ with $B\subseteq \tilde B\subseteq C(B)$ such that $K[\tilde B]$ is Cohen-Macaulay with the property: $\tilde B \subseteq \hat B$ for every affine semigroup $\hat B$ with $B\subseteq \hat B\subseteq C(B)$ such that $K[\hat B]$ is Cohen-Macaulay.

\end{abstract}

\maketitle

\section{Introduction}

By an affine semigroup we mean a finitely generated submonoid of $(\mathbb Z^n,+)$ for some $n\in\mathbb N^+$. Let $K$ be an arbitrary field and $B$ an affine semigroup; as usual $K[B]$ denotes the affine semigroup ring associated to $B$, that is, the $K$-vector space with basis $\{t^b\mid b\in B\}$ and multiplication given by the $K$-bilinear extension of $t^a\cdot t^b=t^{a+b}$. Let $X$ be a subset of $\mathbb Q^n$; we define $C(X):=\{\sum\lambda_ix_i\mid \lambda_i\in\mathbb Q_{\geq0}, x_i\in X\}$ to be the cone spanned by $X$. In the following we will assume that $B$ is a simplicial affine semigroup, that means, by definition, we assume that there are linearly independent elements $e_1,\ldots,e_d\in B$ with $C(\{e_1,\ldots,e_d\})=C(B)$. By $A:=\erz{e_1,\ldots,e_d}$ we denote the submonoid of $B$ generated by $e_1,\ldots,e_d$, thus, $T:=K[A]$ is a polynomial ring in $d$ variables. Note that $\dim T=\dim K[B]=d$. In \cite[Proposition\,2.2]{HSCM}, Hoa and St"uckrad introduced in the homogeneous case a decomposition of $K[B]$ into a direct sum of monomial ideals. Generalizing this result, we will construct certain monomial ideals $I_j$ in $T$ and certain elements $h_j\in C(B)\cap \mathbb Z^n$ such that
\begin{equation}
K[B]\cong \bigoplus\limits_{j=1}^{f} I_j (-h_j)\label{glzerl}
\end{equation}
as $\mathbb Z^n$-graded $T$-modules (see Proposition~\ref{zerl}). Note that the grading on $T$ and on $K[B]$ is always given by $\deg t^a=a$. Some properties of the ring $K[B]$ can be characterized in terms of the semigroup $B$, for example, the Cohen-Macaulay or the Buchsbaum property; for more information we refer to \cite{MHCM, GSWSG, RSHFGA, TBB, CMGM, GSRBB, PSMF, MMTV}. In view of \cite[Theorem\,6.4]{RSHFGA} our decomposition (\ref{glzerl}) can be used to describe the Cohen-Macaulay property, namely, the ring $K[B]$ is Cohen-Macaulay if and only if every ideal $I_j$ is equal to $T$. In Section~\ref{NCM} we will consider the affine semigroup $\tilde B=\erz{e_1,\ldots,e_d,h_1,\ldots,h_f}$ generated by the shifts which occur in the decomposition. We will show in Proposition~\ref{shiftszerl}
$$
K[\tilde B]\cong \bigoplus\limits_{j=1}^{f} T (-h_j)
$$
as $\mathbb Z^n$-graded $T$-modules; meaning $K[\tilde B]$ can be deduced from our decomposition of $K[B]$ replacing $I_j$ by $T$ for all $j=1,\ldots,f$. This shows that the ring $K[\tilde B]$ is always Cohen-Macaulay. Let $B_{sat}$ denote the saturation of $B$ (see Section~\ref{NCM}); by a result of Hochster \cite{MHCM} the ring $K[B_{sat}]$ is Cohen-Macaulay, since the semigroup $B_{sat}$ is normal, see also \cite[Theorem\,6.4]{BGPRKT}. By construction $\tilde B$ and $B_{sat}$ are affine semigroups in $C(B)$ containing $B$ (see Lemma~\ref{contained}), thus, it is natural to ask:

\begin{frage}\label{quest}
Is there a uniquely determined affine semigroup $\hat B$ with $B\subseteq \hat B\subseteq C(B)$ such that $K[\hat B]$ is Cohen-Macaulay, which is minimal among all affine semigroups with these properties?
\end{frage}

This question has a positive answer, more explicitly, $\tilde B$ has exactly this property, see Theorem~\ref{unique}. This implies that $\tilde B$ is always contained in $B_{sat}$, in fact, the semigroup $\tilde B$ could be smaller than $B_{sat}$, see Example~\ref{smaller}.\\

In Section~\ref{decomp} we will introduce the decomposition of $K[B]$, after this we we will study the Cohen-Macaulay property in Section~\ref{NCM}. Finally, we compare our results to the results of Goto, Suzuki, and Watanabe, see Remark~\ref{watanabe}. For unspecified notation we refer to \cite{BGPRKT, EB}.

\section{Decomposition of simplicial affine semigroup rings}\label{decomp}

We define the set 
$$
B_A:=\{x\in B\mid x-a\notin B~\forall a\in A\setminus\{0\}\}.
$$ 
By construction we have if $x\notin B_A$ then $x+y\notin B_A$ for all $x,y\in B$. Moreover, for all $x\in B$ there is an $m\in \mathbb N^+$ such that $mx\in A$, since $C(B)=C(\{e_1,\ldots,e_d\})=C(A)$ by assumption. This shows that $B_A$ is finite. By $G(X)$ we denote the group generated by $X$, for $X\subseteq \mathbb Z^n$. For an element $x\in G(B)$ denote by $\lambda_1^x,\ldots,\lambda_d^x$ the uniquely determined elements of $\mathbb Q$ such that $x=\sum_{i=1}^d\lambda_i^xe_i$. It follows that $x\notin B$ in case that $\lambda_i^x<0$ for some~$i$. Hence for every $x\in B$ we can consider the element $y=x-\sum_{j=1}^dn_je_j\in B$ with $n_j\in\mathbb N$ such that $\sum_{j=1}^dn_j$ is maximal; we get $y\in B_A$. Thus, for all $x\in B$ there is an $y\in B_A$ such that $x=y+\sum_{j=1}^dn_je_j$ for some $n_j\in\mathbb N$. We define $x\sim y$ if $x - y\in G(A)$, hence $\sim$ is an equivalence relation on $G(B)$. Clearly, every element in $G(B)$ is equivalent to an element in $G(B)\cap D$, where
$$
D:=\{x\in \mathbb Q^n \mid x = \sum\nolimits_{i=1}^d \lambda_ie_i, \lambda_i\in \mathbb Q \mbox{ and } 0\leq \lambda_i<1 \mbox{ for all } i=1,\ldots,d\}
$$
and for all $x,y\in G(B)\cap D$ with $x\not=y$ we have $x\not\sim y$, since $e_1,\ldots,e_d$ are linearly independent. Hence the number of equivalence classes $f:=\#(G(B)\cap D)$ in $G(B)$ is finite. Every element in $B$ is by construction equivalent to an element in $B_A$. On the other hand for $x\in G(B)$ we have $x=y-z$ with $y,z\in B$ and again $mz\in A$ for some $m\in\mathbb N^+$. By this we get 
$$
x=y+(m-1)z-mz\sim y+(m-1)z \in B,
$$ 
hence there are exactly $f$ equivalence classes in $B$, $G(B)$, $G(B)\cap D$, and in $B_{A}$. By $\Gamma_1,\ldots,\Gamma_f$ we will denote the equivalence classes on $B_{A}$. We define
$$
h_j:= \sum\nolimits_{i=1}^d \min\maxk{\lambda_i^x\mid x\in\Gamma_j} e_i,
$$
for $j=1,\ldots,f$, hence $h_j\in C(B)$ by construction. Let $i\in\{1,\ldots,d\}$; since $\lambda_i^x-\lambda_i^y\in\mathbb Z$ for all $x,y\in\Gamma_j$, we get for all $x\in\Gamma_j$ that
$$
x-h_j = \sum\nolimits_{i=1}^d\lambda_i^xe_i - \sum\nolimits_{i=1}^d \min\maxk{\lambda_i^y\mid y\in\Gamma_j} e_i=\sum\nolimits_{i=1}^dn_ie_i
$$
for some $n_i\in\mathbb N$, hence $x-h_j\in A$, in particular $x\sim h_j$, and therefore $h_j\in C(B)\cap G(B)$. By construction $\tilde\Gamma_j:=\{t^{x-h_j}\mid x\in\Gamma_j\}$ is a subset of the polynomial ring $T=K[A]$, thus, $I_j:=\tilde\Gamma_jT$ are monomial ideals in $T$ for $j=1,\ldots,f$. In case that $d\geq 2$ we always have $\height I_j\geq 2$ (height) since $\gcd \tilde\Gamma_j=1$ for all $j=1,\ldots,f$. By this we obtain that  all ideals $I_j$ are equal to $T$ in case that $d=1$. In the following we are interested in the canonical $\mathbb Z^n$-grading on $T$ and on $K[B]$, which is given by $\deg t^a=a$. Note that our construction is a generalization of that of \cite[Section\,2]{HSCM} for the homogeneous case; moreover, the next proof is similar as the proof of \cite[Proposition\,2.2]{HSCM}. However, to keep things self contained we will prove it.

\begin{prop}\label{zerl}

There is an isomorphism of $\mathbb Z^n$-graded $T$-modules:
$$K[B]\cong \bigoplus\limits_{j=1}^{f} I_j (-h_j).$$

\end{prop}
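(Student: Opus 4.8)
The plan is to exhibit both sides, as $\mathbb Z^n$-graded $K$-vector spaces, as free modules on monomial bases indexed by the same subsets of $\mathbb Z^n$, and then to match them by a degree shift that is visibly $T$-linear. First I would record the partition of $B$: since $x\sim y$ means $x-y\in G(A)$, adding an element of $A\subseteq G(A)$ to $x$ never changes its $\sim$-class, and, as shown just before the statement, $B_A=\Gamma_1\sqcup\dots\sqcup\Gamma_f$ meets every $\sim$-class of $B$; this gives a disjoint decomposition $B=B^{(1)}\sqcup\dots\sqcup B^{(f)}$ into $f$ classes, where $B^{(j)}$ denotes the one containing $\Gamma_j$. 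Using that every $x\in B$ can be written $x=y+\sum_i n_ie_i$ with $y\in B_A$ and $n_i\in\mathbb N$ (also established above), together with $B_A+A\subseteq B$ and the disjointness just obtained, I would deduce $B^{(j)}=\Gamma_j+A$ for each $j$. Since $A+B^{(j)}\subseteq B^{(j)}$, the subspace $M_j:=\bigoplus_{b\in B^{(j)}}Kt^b$ is a $\mathbb Z^n$-graded $T$-submodule of $K[B]$ and $K[B]=\bigoplus_{j=1}^{f}M_j$ as $\mathbb Z^n$-graded $T$-modules, so it suffices to construct, for each $j$, an isomorphism $I_j(-h_j)\cong M_j$.

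To do so I would first make $I_j$ explicit. As noted before the statement, $x-h_j\in A$ for all $x\in\Gamma_j$, so $\tilde\Gamma_j=\{t^{x-h_j}:x\in\Gamma_j\}$ genuinely consists of monomials of $T=K[A]$; since the monomials of $T$ are exactly the $t^a$ with $a\in A$, which have pairwise distinct exponents because $e_1,\dots,e_d$ are linearly independent, the ideal $I_j=\tilde\Gamma_jT$ has the monomial $K$-basis $\{t^c:c\in(\Gamma_j-h_j)+A\}$ with every graded component at most one-dimensional. Hence the degree-$b$ part of $I_j(-h_j)$ is $Kt^{b-h_j}$ when $b-h_j\in(\Gamma_j-h_j)+A$, i.e.\ when $b\in\Gamma_j+A=B^{(j)}$, and is $0$ otherwise, which matches the degree-$b$ part of $M_j$. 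I would then define $\varphi_j\colon I_j(-h_j)\to M_j$ on the monomial basis by $t^{c}\mapsto t^{c+h_j}$ and extend $K$-linearly: it preserves degrees by construction, it is $T$-linear because $t^{a}\cdot t^{c}=t^{a+c}\mapsto t^{a+c+h_j}=t^{a}\cdot t^{c+h_j}$, and it is bijective because $c\mapsto c+h_j$ is a bijection from the exponent set $(\Gamma_j-h_j)+A$ of $I_j$ onto $B^{(j)}=\Gamma_j+A$. Summing the $\varphi_j$ then yields the asserted isomorphism $\bigoplus_{j=1}^{f}I_j(-h_j)\cong K[B]$.

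I expect the only genuinely delicate points to be the two bookkeeping identities $B^{(j)}=\Gamma_j+A$ and the description of the monomial support of $I_j$ as $(\Gamma_j-h_j)+A$; both rest on the linear independence of $e_1,\dots,e_d$ (equivalently $A\cong\mathbb N^d$), which is exactly what guarantees that the relevant sums are direct and that no two distinct exponents in $\mathbb Q^n$ collide. Everything else is formal.
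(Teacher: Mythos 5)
Your proposal is correct and is essentially the paper's argument in a different packaging: the paper defines the single map $\psi(x_1,\ldots,x_f)=\sum_j x_jt^{h_j}$ and checks surjectivity and injectivity directly (injectivity via $c_i+h_i\not\sim c_j+h_j$ for $i\not=j$), whereas you first split $K[B]=\bigoplus_j M_j$ along the equivalence classes and then note that the $j$-th component of $\psi$, namely multiplication by $t^{h_j}$, is an isomorphism $I_j(-h_j)\to M_j$. The ingredients ($B^{(j)}=\Gamma_j+A$, the monomial support of $I_j$, distinctness of degrees across classes) are exactly those the paper uses, so no further comment is needed.
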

\begin{proof}

Define
$$
\psi:  \bigoplus\limits_{j=1}^{f} I_j (-h_j) \rightarrow K[B],
$$
by 
$$
\psi(x_1,\ldots,x_f)=\sum\nolimits_{j=1}^{f} x_j t^{h_j}.
$$
By construction $\psi$ is well defined and preserves the canonical grading. Let $t^x\in K[B]$, that is, $x\in B$. By construction, there is an $y\in B_A$ such that $x=y + \sum_{i=1}^{d}n_i e_i$ for some $n_i\in\mathbb N$. We have $y\in \Gamma_j$ for some $j$, hence $t^{y-h_j}\in I_j$ and therefore
$$
\psi(0,\ldots,0,t^{\sum\nolimits_{i=1}^{d}n_ie_i + y-h_j},0,\ldots,0) = t^{\sum\nolimits_{i=1}^{d}n_ie_i + y-h_j} t^{h_j} = t^x,
$$
since $t^{\sum_{i=1}^{d}n_ie_i}\in T$. This shows that $\psi$ is surjective. Let $x\in \ker \psi$; since $\psi$ is homogeneous we may assume that $x$ is also homogeneous; meaning $x=(\alpha_1t^{c_1},\ldots,\alpha_ft^{c_f})$ for some $\alpha_j\in K$ and some $c_j\in A$, $j=1,\ldots,f$. We get
$$
\psi(x)=\sum\nolimits_{j=1}^{f} \alpha_j t^{c_j+h_j} = 0.
$$
By construction $c_i+h_i\not\sim c_j+h_j$ for all $i\not=j$, hence $c_i+h_i\not= c_j+h_j$ for all $i\not=j$. This shows that $\alpha_j=0$ for all $j=1,\ldots,f$ and therefore $\psi$ is injective.
\end{proof}

\begin{bsp}[{\cite[Example\,10]{GSRBB}}]\label{wbsp}

The following example was given in \cite{GSRBB} to study the relation between the Cohen-Macaulay and the Buchsbaum property. Consider the simplicial affine semigroup $B=\erz{(2, 0), (0, 1), (3, 1), (1, 2)}$, say $A=\erz{(2,0),(0,1)}$. We have 
$$
B_A = \{(0, 0), (3, 1), (1, 2)\}.
$$ 
By this we get $\Gamma_1=\{(0, 0)\}$ and $\Gamma_2=\{(3, 1),(1,2)\}$, thus, $h_1=(0, 0)$, $h_2=(1, 1)$ and therefore $\tilde \Gamma_1=\{1\}$ and $\tilde\Gamma_2=\{t^{(2,0)},t^{(0,1)}\}$. By Proposition~\ref{zerl} it follows that
$$
K[B] \cong T(-(0,0))\; \oplus\; (t^{(2,0)},t^{(0,1)}) T(-(1,1))
$$
as $\mathbb Z^2$-graded $T$-modules.

\end{bsp}

\section{Natural Cohen-Macaulayfication}\label{NCM}

Since $B$ is a simplicial affine semigroup we get that the cone $C(B)$ is pointed, that is, if $x,-x\in C(B)$ it follows that $x=0$. Hence $B$ is a positive affine semigroup, meaning, $0$ is the only unit in $B$, thus, we can fix a positive grading on $K[B]$; see \cite[Page\,58,59]{BGPRKT}. Denote by $T_+:=(t^{e_1},\ldots,t^{e_d})T$ the homogeneous maximal ideal of $T$ and by $H^i_{T_+}(M)$ the $i$-th local cohomology module of a $T$-module $M$ with respect to $T_+$. For a general treatment of the Cohen-Macaulay property and of local cohomology we refer to \cite{BHCMR} and to \cite{BSLC}. The following Theorem is due to Stanley and shows that our canonical decomposition can be used to characterize the Cohen-Macaulay property of $K[B]$:

\begin{satz}[{\cite[Theorem\,6.4]{RSHFGA}}]\label{cmchar}

The following assertions are equivalent:

\begin{enumerate}

\item The ring $K[B]$ is Cohen-Macaulay.

\item There exists $\gamma_1,\ldots,\gamma_f\in B$ such that every element $x\in B$ has a representation of the form $x=\gamma_j+\sum_{i=1}^dn_ie_i$ for some $\gamma_j$ and some  $n_i\in\mathbb N$.

\item We have $\#\Gamma_j=1$ for all $j=1,\ldots,f$.

\item We have $I_j=T$ for all $j=1\ldots,f$.

\end{enumerate}
\end{satz}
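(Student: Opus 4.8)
The plan is to prove the cycle of implications $(1)\Leftrightarrow(4)$ using the decomposition of Proposition~\ref{zerl}, and $(2)\Leftrightarrow(3)\Leftrightarrow(4)$ by direct semigroup-theoretic bookkeeping. The equivalence $(1)\Leftrightarrow(4)$ is the content attributed to Stanley: from $K[B]\cong\bigoplus_{j=1}^f I_j(-h_j)$ as $\mathbb Z^n$-graded (hence in particular $\mathbb Z$-graded via the positive grading) $T$-modules, local cohomology commutes with finite direct sums and with the grading shifts, so $H^i_{T_+}(K[B])\cong\bigoplus_{j=1}^f H^i_{T_+}(I_j)(-h_j)$. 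Since $T$ is a polynomial ring in $d$ variables and $\dim K[B]=\dim I_j=d$ for every $j$, the module $K[B]$ is Cohen-Macaulay if and only if $H^i_{T_+}(I_j)=0$ for all $i<d$ and all $j$, i.e. every $I_j$ is a Cohen-Macaulay $T$-module of dimension $d$. As $I_j$ is an ideal of the Cohen-Macaulay ring $T$, it is a maximal Cohen-Macaulay $T$-module exactly when it has depth $d$; but a nonzero proper ideal of a Cohen-Macaulay local (or graded) ring of dimension $d\ge 2$ has depth at most $d-1$ (its quotient $T/I_j$ is nonzero of dimension $<d$, so the short exact sequence $0\to I_j\to T\to T/I_j\to 0$ forces $\operatorname{depth} I_j\le d-1$ unless $I_j=T$). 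The case $d=1$ was already handled in the text ($I_j=T$ automatically), so in all cases $K[B]$ is Cohen-Macaulay iff $I_j=T$ for all $j$, giving $(1)\Leftrightarrow(4)$.

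For $(4)\Leftrightarrow(3)$: recall $I_j=\tilde\Gamma_j T$ where $\tilde\Gamma_j=\{t^{x-h_j}\mid x\in\Gamma_j\}\subseteq T$, and $h_j=\sum_i\min\{\lambda_i^x\mid x\in\Gamma_j\}e_i$. If $\#\Gamma_j=1$, say $\Gamma_j=\{x\}$, then $h_j=x$, so $\tilde\Gamma_j=\{t^0\}=\{1\}$ and $I_j=T$. Conversely, if $I_j=T$ then $1=t^0\in I_j$, and since $I_j$ is a monomial ideal generated by the $t^{x-h_j}$ with $x-h_j\in A$, this forces $x-h_j=0$ for some $x\in\Gamma_j$; by the definition of $h_j$ via coordinatewise minima, $x-h_j=0$ means $\lambda_i^x=\min\{\lambda_i^y\mid y\in\Gamma_j\}$ for every $i$. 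But any other $y\in\Gamma_j$ satisfies $y-h_j\in A$, i.e. $\lambda_i^y\ge\lambda_i^x$ for all $i$, while also $\lambda_i^y<1\le \lambda_i^x+1$ and $\lambda_i^y-\lambda_i^x\in\mathbb Z$, so $\lambda_i^y=\lambda_i^x$ for all $i$, whence $y=x$ by linear independence of the $e_i$; thus $\#\Gamma_j=1$.

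For $(2)\Leftrightarrow(3)$: if $(3)$ holds, take $\gamma_j$ to be the unique element of $\Gamma_j$; every $x\in B$ is equivalent to some element of $B_A$, and by the discussion preceding Proposition~\ref{zerl} actually $x=y+\sum n_ie_i$ with $y\in B_A$, $y\in\Gamma_j=\{\gamma_j\}$, giving the required representation. Conversely, suppose $(2)$ holds with witnesses $\gamma_1,\dots,\gamma_f\in B$; after replacing each $\gamma_j$ by the element of $B_A$ obtained by subtracting a maximal combination $\sum n_ie_i$ (as in the text, this lands in $B_A$ and does not change the set of representable elements), we may assume each $\gamma_j\in B_A$. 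The $\gamma_j$ then hit all $f$ equivalence classes (every $x\in B$, in particular every element of $B_A$, is $\gamma_j+\sum n_ie_i$ for some $j$, so is $\sim\gamma_j$), so after reindexing $\gamma_j\in\Gamma_j$. Finally, for any $x\in\Gamma_j$ we have $x=\gamma_{j'}+\sum n_ie_i$ for some $j'$; since $x\sim\gamma_{j'}$ and also $x\sim\gamma_j$ we get $j'=j$, so $x-\gamma_j=\sum n_ie_i\in A$ with $n_i\ge 0$, forcing $\lambda_i^x\ge\lambda_i^{\gamma_j}$; as $x\in B_A$, subtracting any $e_i$ leaves $B$, so in fact $n_i=0$ for all $i$ (otherwise $x-e_i\in B$), i.e. $x=\gamma_j$. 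Hence $\#\Gamma_j=1$.

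The main obstacle is the $(1)\Leftrightarrow(4)$ step, specifically pinning down that a proper nonzero ideal $I_j\subsetneq T$ cannot be a maximal Cohen-Macaulay module; since we noted in the text that $\operatorname{height}I_j\ge 2$ when $d\ge 2$ (because $\gcd\tilde\Gamma_j=1$), the quotient $T/I_j$ has dimension $\le d-2$, which makes the depth estimate $\operatorname{depth}I_j\le d-1$ via the short exact sequence completely transparent, and the remaining implications are elementary semigroup combinatorics already half-developed in Section~\ref{decomp}.
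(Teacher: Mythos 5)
Your overall route is essentially the paper's: $(1)\Leftrightarrow(4)$ via the decomposition of Proposition~\ref{zerl} together with the observation $\height I_j\geq 2$, and $(2)\Leftrightarrow(3)\Leftrightarrow(4)$ by direct combinatorics (the paper outsources $(1)\Leftrightarrow(2)$ to Stanley's theorem and only sketches $(2)\Leftrightarrow(3)$, but the substance is the same). Two of your steps are wrong as literally written, although both are repairable. First, the blanket assertion that a nonzero proper ideal of a Cohen-Macaulay graded ring of dimension $d\geq2$ has depth at most $d-1$ is false: a principal ideal generated by a nonzerodivisor, e.g.\ $(x)\subseteq K[x,y]$, is free and hence maximal Cohen-Macaulay. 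Your parenthetical justification fails because $\dim T/I_j<d$ only yields $\operatorname{depth}(T/I_j)\leq d-1$, which does not force $\operatorname{depth}I_j\leq d-1$ in the depth lemma. What rescues the argument is exactly what you note at the end: $\height I_j\geq2$ gives $\dim T/I_j\leq d-2$, hence $\operatorname{depth}(T/I_j)\leq d-2<\operatorname{depth}T$ and the short exact sequence forces $\operatorname{depth}I_j=\operatorname{depth}(T/I_j)+1\leq d-1$. The height hypothesis must therefore be built into the statement from the start; this is precisely how the paper argues, phrased via the nonvanishing of $H^{i}_{T_+}(T/I_j)$ and the long exact sequence of local cohomology.

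Second, in $(4)\Rightarrow(3)$ you invoke $\lambda_i^y<1$ for $y\in\Gamma_j\subseteq B_A$. That inequality holds for elements of the fundamental domain $D$, not for elements of $B_A$: in Example~\ref{wbsp} the element $(3,1)\in B_A$ has $\lambda_1=3/2$. The conclusion you want follows instead directly from the definition of $B_A$: if $x\in\Gamma_j$ satisfies $x=h_j$ and some $y\in\Gamma_j$ has $y-h_j\in A\setminus\{0\}$, then $y-(y-x)=x\in B$ contradicts $y\in B_A$, so $y=x$ and $\#\Gamma_j=1$. (The paper's own argument for this implication runs in the contrapositive direction: if $\#\Gamma_j\geq2$ then no $x\in\Gamma_j$ equals $h_j$, so every generator $t^{x-h_j}$ of $I_j$ is a nonunit and $I_j$ is proper.) With these two local repairs your proof is complete and coincides in substance with the paper's.
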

\begin{proof}

The equivalence between $(1)$ and $(2)$ was proven in \cite[Theorem\,6.4]{RSHFGA}, provided that $B\subseteq \mathbb N^n$. Since $C(A)=C(B)$ it follows that $t^{e_1},\ldots,t^{e_d}$ is a homogeneous system of parameters of $K[B]$. Thus, $K[B]$ is a Cohen-Macaulay ring if and only if $K[B]$ is a free $T$-module by \cite[Proposition\,6.3]{BGPRKT}. So, $(4)\Rightarrow (1)$ by Proposition~\ref{zerl}. In case that $I_j$ is a proper ideal for some $j$ we get that $\dim T/I_j\leq d-2$, since $\height I_j\geq2$ in this case. We have $H^{i}_{T_+}(T/I_j)\not=0$ for some $i$ with $0\leq i\leq d-2$, and $H^i_{T_+}(T)=0$ for every $i\not=d$, for example, by \cite[Proposition\,A1.16]{TGOS}. From the long exact sequence
$$
\cdots\longrightarrow H^{i}_{T_+}(T)\longrightarrow H^{i}_{T_+}(T/I_j)\longrightarrow H^{i+1}_{T_+}(I_j)\longrightarrow H^{i+1}_{T_+}(T)\longrightarrow \cdots
$$
we obtain $H^{i}_{T_+}(T/I_j)\cong H^{i+1}_{T_+}(I_j)$ for all $i$ with $0\leq i\leq d-2$. Hence $H^i_{T_+}(I_j)\not=0$ for some $i$ with $1\leq i\leq d-1$, and therefore $H^i_{T_+}(K[B])\not=0$ for some $i$ with $1\leq i\leq d-1$ as well by Proposition~\ref{zerl} and the fact that local cohomology commutes with direct sums. Thus, $K[B]$ is not a free $T$-module by a similar argument, and therefore $(1)$ and $(4)$ are equivalent. The assertions $(2)$ and $(3)$ are equivalent as well. Moreover, by construction $(3)\Rightarrow (4)$. In case that $\#\Gamma_j\geq2$ for some $j$ we get for all $x\in\Gamma_j$ an $i\in\{1,\ldots,d\}$ such that $\lambda_i^{h_j}<\lambda_i^x$ and hence $t^{x-h_j}\not=1$. This shows that $I_j$ is a proper monomial ideal in $T$ and we are done.
\end{proof}

Let us consider an affine semigroup $\hat B$ with $B\subseteq\hat B\subseteq C(B)$. The semigroup $\hat B$ is again simplicial, since $C(\hat B)=C(B)=C(\{e_1,\ldots,e_d\})$. In the following we are interested in the semigroup generated by the shifts which occur in the decomposition. We set
$$
\tilde B:=\erz{e_1,\ldots,e_d,h_1,\ldots,h_f},
$$
and we define the saturation $B_{sat}$ of $B$ by $B_{sat}:=C(B)\cap G(B)$. Note that $K[B_{sat}]$ is always Cohen-Macaulay by \cite[Theorem\,1]{MHCM}, since $B_{sat}$ is normal;  see also \cite[Theorem\,6.4]{BGPRKT}.

\begin{lem}\label{contained}

We have $B\subseteq\tilde B\subseteq B_{sat}$.

\end{lem}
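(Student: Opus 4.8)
The plan is to prove the two inclusions $B\subseteq\tilde B$ and $\tilde B\subseteq B_{sat}$ separately. The first is immediate: by definition $\tilde B=\erz{e_1,\ldots,e_d,h_1,\ldots,h_f}$, so every generator $e_i$ of $A$ lies in $\tilde B$, and we must only check that every element of $B$ lies in $\tilde B$. For this recall from Section~\ref{decomp} that every $x\in B$ can be written as $x=y+\sum_{i=1}^d n_ie_i$ with $y\in B_A$ and $n_i\in\mathbb N$, and $y\in\Gamma_j$ for some $j$; since $y-h_j=\sum_{i=1}^d m_ie_i$ with $m_i\in\mathbb N$ (this was established just before Proposition~\ref{zerl}), we get $x=h_j+\sum_{i=1}^d(n_i+m_i)e_i\in\erz{e_1,\ldots,e_d,h_j}\subseteq\tilde B$. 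Hence $B\subseteq\tilde B$.

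For the second inclusion $\tilde B\subseteq B_{sat}=C(B)\cap G(B)$, it suffices to check that each generator $e_i$ and each $h_j$ lies in $C(B)\cap G(B)$, since $C(B)\cap G(B)$ is itself a semigroup. The elements $e_i$ obviously lie in $B\subseteq C(B)\cap G(B)$. For $h_j$, it was shown in Section~\ref{decomp} that $h_j\in C(B)$ (being a nonnegative rational combination of the $e_i$) and that $h_j\in G(B)$ (indeed $x-h_j\in A$ for any $x\in\Gamma_j$, so $h_j=x-(x-h_j)\in G(B)$, and $h_j\sim x$ was noted explicitly). Therefore $h_j\in C(B)\cap G(B)=B_{sat}$ for all $j$, and since $B_{sat}$ is a semigroup containing all the generators of $\tilde B$, we conclude $\tilde B\subseteq B_{sat}$.

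There is essentially no obstacle here: the statement is a bookkeeping consequence of facts already assembled in Section~\ref{decomp}, namely that $h_j\in C(B)\cap G(B)$ and that $B$ is generated over $A$ by representatives that are $\sim$-equivalent to the $h_j$ via differences in $A$. The only point requiring a moment's care is the first inclusion, where one should make explicit that the ``slack'' $y-h_j$ between a semigroup element $y\in\Gamma_j$ and the shift $h_j$ lies in $A$ rather than merely in $G(A)$; this is exactly the content of the displayed computation $x-h_j=\sum_{i=1}^d n_ie_i$ with $n_i\in\mathbb N$ preceding Proposition~\ref{zerl}, so nothing new needs to be proven. I would write the argument in two short sentences per inclusion and leave it at that.
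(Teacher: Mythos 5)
Your proof is correct and follows the same route as the paper: the inclusion $B\subseteq\tilde B$ via the decomposition $x=y+\sum n_ie_i$ with $y\in\Gamma_j$ and $y-h_j\in A$, and the inclusion $\tilde B\subseteq B_{sat}$ from the fact, established in Section~\ref{decomp}, that $h_j\in C(B)\cap G(B)$. Nothing is missing.
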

\begin{proof}

We get $h_1.\ldots,h_f \in C(B)\cap G(B)$ and therefore $\tilde B\subseteq C(B)\cap G(B) = B_{sat}$. Let $x\in B$; by construction there is an $y\in B_A$ such that  $x=y+\sum_{i=1}^{d} n_i e_i$ for some $n_i\in \mathbb N$. We have $y\in\Gamma_j$ for some $j\in\{1,\ldots,f\}$. Moreover, $y-h_j\in A$ this implies $x=h_j+\sum_{i=1}^{d} n_i' e_i$ for some $n_i'\in\mathbb N$ and therefore $x\in\tilde B$, since $e_i\in\tilde B$.
\end{proof}

This shows that $\tilde B$ is simplicial, since $B\subseteq \tilde B\subseteq C(B)$.

\begin{bsp}\label{smaller}

Let us again consider the affine semigroup $B=\erz{(2, 0), (0, 1), (3, 1), (1, 2)}$ with $A=\erz{(2,0),(0,1)}$. We have $\tilde B=\erz{(2, 0), (0, 1), (1, 1)}$ and $B_{sat}=\mathbb N^2$. Hence $B\subsetneqq \tilde B\subsetneqq B_{sat}$. Moreover, $K[B]$ is not Cohen-Macaulay by Theorem~\ref{cmchar}, but Buchsbaum by \cite{GSRBB}. One can show that $\tilde B_{A}=\{(0,0),(1,1)\}$ and therefore
$$
K[\tilde B] \cong T(-(0,0))\;\oplus\; T(-(1,1))
$$
as $\mathbb Z^2$-graded $T$-modules. It follows that the ring $K[\tilde B]$ is Cohen-Macaulay by Theorem~\ref{cmchar}.

\end{bsp}

In view of the above Example and Proposition~\ref{zerl} it is natural to ask the following:

\begin{Frage}

Is $K[\tilde B]$ always isomorphic to the direct sum of $T$ shifted by $h_j$? 

\end{Frage}

This would imply that $K[\tilde B]$ is always Cohen-Macaulay.  The next Proposition will give a positive answer to this question:

\begin{prop}\label{shiftszerl}

We have 
$$
K[\tilde B] \cong \bigoplus_{j=1}^f T(-h_j)
$$
as $\mathbb Z^n$-graded $T$-modules.

\end{prop}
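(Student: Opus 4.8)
The plan is to deduce the statement from Proposition~\ref{zerl} applied to the semigroup $\tilde B$ itself. Since $C(\tilde B)=C(B)=C(\{e_1,\ldots,e_d\})$ and $e_1,\ldots,e_d\in\tilde B$, the semigroup $\tilde B$ is again simplicial with the same $A$ and $T$, so Proposition~\ref{zerl} gives $K[\tilde B]\cong\bigoplus_{j}\tilde I_j(-\tilde h_j)$ with monomial ideals $\tilde I_j\subseteq T$ and shifts $\tilde h_j$ associated to the equivalence classes of $\tilde B_A$. Hence it suffices to show that $\tilde B_A=\{h_1,\ldots,h_f\}$: the $h_j$ are pairwise non-equivalent (each $h_j$ is equivalent to the elements of $\Gamma_j$, and $\Gamma_1,\ldots,\Gamma_f$ meet distinct equivalence classes), so every equivalence class of $\tilde B_A$ is then a singleton $\{h_j\}$, whence $\tilde h_j=\sum_{i=1}^d\lambda_i^{h_j}e_i=h_j$ and $\tilde I_j=(t^{h_j-h_j})T=T$ for all $j$, which yields the claim.

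First one records that $G(\tilde B)=G(B)$: indeed $\tilde B\subseteq B_{sat}=C(B)\cap G(B)$ by Lemma~\ref{contained}, while $B\subseteq\tilde B$. Thus $\sim$ has exactly $f$ equivalence classes on $\tilde B$ as well, and $h_1,\ldots,h_f$ represent all of them. The core of the proof is the claim that \emph{for every $z\in\tilde B$ one has $z-h_j\in\sum_{i=1}^d\mathbb Ne_i$, where $h_j$ is the unique element of $\{h_1,\ldots,h_f\}$ with $z\sim h_j$.} Granting this, $\tilde B_A=\{h_1,\ldots,h_f\}$ follows quickly: if $x\in\tilde B_A$, then $x=h_j+\sum n_ie_i$ with $n_i\in\mathbb N$, and $n_i\geq1$ for some $i$ would give $x-e_i\in\tilde B$, contradicting $x\in\tilde B_A$, so $x=h_j$; conversely $h_j\in\tilde B$ by construction, and $h_j-a\in\tilde B$ for some $a\in A\setminus\{0\}$ would force $h_j-a=h_j+\sum n_ie_i$ with $n_i\in\mathbb N$, hence $a+\sum n_ie_i=0$ with non-negative coefficients not all zero, contradicting the linear independence of $e_1,\ldots,e_d$.

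To prove the claim I would induct on $a:=a_1+\cdots+a_f$ for a representation $z=\sum_l a_lh_l+\sum_i b_ie_i$ with $a_l,b_i\in\mathbb N$. If $a=0$, then $z\in A$, so $z\sim 0$; since $0\in B_A$ and $\lambda_i^0=0\leq\lambda_i^x$ for every $x$ in the equivalence class of $0$, the shift $h_{j_0}$ attached to that class equals $0$, and $z=h_{j_0}+\sum b_ie_i$ does the job. If $a\geq1$, pick $l_0$ with $a_{l_0}\geq1$; then $z':=z-h_{l_0}\in\tilde B$ has parameter $a-1$, so by induction $z'=h_{j'}+\sum c_ie_i$ with $c_i\in\mathbb N$, and $z=h_{l_0}+h_{j'}+\sum c_ie_i$ lies in the equivalence class of $h_{l_0}+h_{j'}$, say the class of $h_j$. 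Since $z-h_j\in G(A)$ it is an integer combination of $e_1,\ldots,e_d$, so, using $c_i\geq0$, it remains to show $\lambda_i^{h_j}\leq\lambda_i^{h_{l_0}}+\lambda_i^{h_{j'}}$ for each $i$; this is the step I expect to be the main obstacle.

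For that fixed $i$, I would choose $x\in\Gamma_{l_0}$ and $w\in\Gamma_{j'}$ minimizing the $i$-th coordinate, so that $\lambda_i^x=\lambda_i^{h_{l_0}}$ and $\lambda_i^w=\lambda_i^{h_{j'}}$. Then $x+w\in B$ with $x+w\sim h_{l_0}+h_{j'}\sim h_j$, and by the reduction recalled in Section~\ref{decomp} one can write $x+w=v+\sum_k m_ke_k$ with $v\in B_A$ and $m_k\in\mathbb N$; then $v\in\Gamma_j$, hence
$$
\lambda_i^{h_j}\leq\lambda_i^{v}=\lambda_i^{x+w}-m_i\leq\lambda_i^{x+w}=\lambda_i^{h_{l_0}}+\lambda_i^{h_{j'}},
$$
which is exactly what is needed. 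Combining this with the reduction of the first paragraph to $\tilde B_A=\{h_1,\ldots,h_f\}$ then finishes the proof.
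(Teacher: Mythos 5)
Your proposal is correct and follows essentially the same route as the paper: the same induction on the number of $h_l$'s in a representation of an element of $\tilde B$, and the same proof of the key inequality $\lambda_i^{h_j}\leq\lambda_i^{h_{l_0}}+\lambda_i^{h_{j'}}$ by adding minimizing representatives from $\Gamma_{l_0}$ and $\Gamma_{j'}$ and reducing into $B_A$. The only (harmless) difference is that you spell out explicitly that $\tilde B_A=\{h_1,\ldots,h_f\}$ before invoking Proposition~\ref{zerl}, a step the paper treats as immediate.
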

\begin{proof}
By the comments in the beginning of Section~\ref{decomp} we get that the number of equivalence classes on $\tilde B_{A}$ is equal to that on $B_A$, since $G(B)=G(\tilde B)$. By construction we have $h_k\not\sim h_l$ for all $k,l\in\{1,\ldots,f\}$ with $k\not=l$, hence for all $x'\in\tilde B$ there is a $j\in\{1,\ldots,f\}$ such that $x'\sim h_j$. We will show that if $x'\sim h_j$ for some $j\in\{1,\ldots,f\}$, then $x'-h_j\in A$, that is, $x'=h_j+\sum_{i=1}^dn_ie_i$ for some $n_i\in\mathbb N$ and therefore $\Gamma_1'=\{h_1\},\ldots,\Gamma_f'=\{h_f\}$ are the equivalence classes on $\tilde B_{A}$ and we are done by Proposition~\ref{zerl} and construction. Let $x'\in\tilde B$, that is,
$$
x'=\sum\nolimits_{t=1}^{d} n_t' e_t + \underbrace{\sum\nolimits_{t=1}^{f} n_th_t}_{= x} = \sum\nolimits_{t=1}^{d} n_t' e_t + x,
$$
for some $n_t',n_t\in\mathbb N$. We show by induction over $n:=\sum_{t=1}^{f}n_t$ that for $h_j$ with $h_j\sim x$ for some $j\in\{1,\ldots,f\}$ we have $x-h_j\in A$ and therefore $h_j\sim x'$ and $x'-h_j\in A$ as well.\\ 
\underline{$n=0$}: This is clear, since $h_j=0$ for some $j\in\{1,\ldots,f\}$.\\ 
\underline{$n>0$}: We have $x=\sum_{t=1}^{f} n_th_t=x''+h_i$ for some $i\in\{1,\ldots,f\}$, by induction there is a $h_j$ for some $j\in\{1,\ldots,f\}$ such that $h_j\sim x''$ and $x''-h_j\in A$. We have $x\sim h_l$ for some $l\in\{1,\ldots,f\}$. It is now sufficient to show that $h_j+h_i-h_l\in A$; this implies $x-h_l\in A$, since $x''-h_j\in A$. Note that $h_l\sim x'' +h_i\sim h_j+h_i$. We show that for $k=1,\ldots,d$ we have $\lambda_k^{h_{l}}\leq\lambda_k^{h_j}+\lambda_k^{h_i}$. This implies 
$$
h_j+h_i-h_l=\sum\nolimits_{t=1}^d\underbrace{(\lambda_t^{h_j}+\lambda_t^{h_i}-\lambda_t^{h_{l}})}_{\geq 0}e_t\in A,
$$
since $\lambda_k^{h_j}+\lambda_k^{h_i}-\lambda_k^{h_{l}}\in\mathbb Z$ for all $k=1,\ldots,d$. Let $\Gamma_1,\ldots,\Gamma_f$ be the equivalence classes on $B_A$. Fix one $k\in\{1,\ldots,d\}$. By construction there is an element $y_{jk}\in\Gamma_j$ with $\lambda_k^{y_{jk}}=\lambda_k^{h_{j}}$ and some $y_{ik}\in\Gamma_i$ with $\lambda_k^{y_{ik}}=\lambda_k^{h_{i}}$. Note that $y_{jk}\sim h_j$ and $y_{ik}\sim h_i$, hence 
\begin{equation}y_{jk}+y_{ik}\sim h_j+h_i\sim h_l.\label{tilde}\end{equation}
We have $y_{jk}+y_{ik}\in B$ and therefore there is an $s\in B_A$ such that:
$$
y_{jk}+y_{ik}=s+\sum\nolimits_{t=1}^{d}n_t e_t,
$$
for some $n_t\in\mathbb N$. Clearly, $\lambda_k^s\leq \lambda_k^{y_{jk}}+\lambda_k^{y_{ik}}=\lambda_k^{h_j}+\lambda_k^{h_i}$. We have $h_l\stackrel{(\ref{tilde})}{\sim} y_{jk}+y_{ik}\sim s$ and therefore $\lambda_k^{h_{l}}\leq \lambda_k^s$, since $s\in\Gamma_l$. This implies $\lambda_k^{h_{l}}\leq\lambda_k^{h_j}+\lambda_k^{h_i}$ as required.
\end{proof}

That means that $K[\tilde B]$ can be deduced from the decomposition of $K[B]$ in Proposition~\ref{zerl} replacing $I_j$ by $T$ for all $j=1,\ldots,f$. We will now give an answer to Question~\ref{quest} raised in the introduction.

\begin{satz}\label{unique}

Let $B$ be a simplicial affine semigroup, and $\tilde B$ be as above. The ring $K[\tilde B]$ is Cohen-Macaulay for the affine semigroup $\tilde B$ with $B\subseteq\tilde B\subseteq C(B)$, moreover, if $\hat B$ is an affine semigroup with $B\subseteq \hat B\subseteq C(B)$ such that the ring $K[\hat B]$ is Cohen-Macaulay, then $\tilde B\subseteq \hat B$.

\end{satz}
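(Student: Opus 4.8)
The plan is to handle the two assertions separately. The first one is immediate from Proposition~\ref{shiftszerl}: it exhibits $K[\tilde B]\cong\bigoplus_{j=1}^fT(-h_j)$ as a free $T$-module, and since $C(A)=C(B)=C(\tilde B)$ the monomials $t^{e_1},\ldots,t^{e_d}$ form a homogeneous system of parameters of $K[\tilde B]$, so freeness over $T=K[A]$ forces Cohen-Macaulayness exactly as in the proof of Theorem~\ref{cmchar}; equivalently, one may apply Theorem~\ref{cmchar} to $\tilde B$ in place of $B$, where all the occurring ideals equal $T$.

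For the inclusion $\tilde B\subseteq\hat B$, since $e_1,\ldots,e_d\in B\subseteq\hat B$ and $\tilde B=\erz{e_1,\ldots,e_d,h_1,\ldots,h_f}$, it is enough to show $h_j\in\hat B$ for every $j$. First I would note that $\hat B$ is again simplicial with the same $e_1,\ldots,e_d$, so Theorem~\ref{cmchar} applies to it: Cohen-Macaulayness of $K[\hat B]$ means that each equivalence class on $\hat B_A$ is a singleton, say with representative $\hat\gamma_k$. The crucial consequence I would extract is that every $x\in\hat B$ satisfies $x-\hat\gamma_k\in A$, where $k$ is the unique index with $x\sim\hat\gamma_k$, equivalently $\lambda_i^x\geq\lambda_i^{\hat\gamma_k}$ for all $i$. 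This is just the decomposition $x=\hat y+\sum_{i=1}^dn_ie_i$ with $\hat y\in\hat B_A$ and $n_i\in\mathbb N$ from Section~\ref{decomp}, followed by the identification $\hat y=\hat\gamma_k$.

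Then, fixing $j$: since $h_j\in C(B)\cap G(B)\subseteq G(\hat B)$, there is a unique $k$ with $h_j\sim\hat\gamma_k=:\hat\gamma$, and $\lambda_i^{h_j}-\lambda_i^{\hat\gamma}\in\mathbb Z$ for all $i$. I would then show $\lambda_i^{h_j}\geq\lambda_i^{\hat\gamma}$ for each $i$: by the defining formula for $h_j$ there is $y\in\Gamma_j$ with $\lambda_i^y=\lambda_i^{h_j}$; this $y$ lies in $B_A\subseteq\hat B$ and satisfies $y\sim h_j\sim\hat\gamma$ (the equivalence relations on $G(B)$ and on $G(\hat B)$ agree on $G(B)$, and $y-h_j\in A$ by Section~\ref{decomp}), so $y$ belongs to the $\hat B$-class of $\hat\gamma$, whence $y-\hat\gamma\in A$ by the previous paragraph and in particular $\lambda_i^{h_j}=\lambda_i^y\geq\lambda_i^{\hat\gamma}$. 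Since the differences $\lambda_i^{h_j}-\lambda_i^{\hat\gamma}$ are then nonnegative integers, $h_j-\hat\gamma=\sum_{i=1}^d(\lambda_i^{h_j}-\lambda_i^{\hat\gamma})e_i\in A$, so $h_j=\hat\gamma+(h_j-\hat\gamma)\in\hat B$, finishing the proof. The only real content is the passage through the Cohen-Macaulay criterion of Theorem~\ref{cmchar} applied to $\hat B$ and the resulting coordinatewise lower bound $\lambda_i^x\geq\lambda_i^{\hat\gamma_k}$; matching this against the coordinatewise-minimum definition of $h_j$ is then straightforward, and that matching is the step I would be most careful with — in particular keeping track of the two ambient groups $G(B)\subseteq G(\hat B)$ and of which elements live in which.
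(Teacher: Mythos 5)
Your proposal is correct and follows essentially the same route as the paper: Cohen--Macaulayness of $K[\tilde B]$ from Proposition~\ref{shiftszerl} together with Theorem~\ref{cmchar}, and then, for each $j$ and each coordinate $i$, picking an element of $\Gamma_j$ realizing the minimum $\lambda_i^{h_j}$ and comparing it with the singleton class representative in $\hat B_A$ to get $\lambda_i^{h_j}\geq\lambda_i^{\hat\gamma}$, whence $h_j-\hat\gamma\in A$ and $h_j\in\hat B$. No gaps.
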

\begin{proof}

By Proposition~\ref{shiftszerl} and Theorem~\ref{cmchar} we get that the ring $K[\tilde B]$ is Cohen-Macaulay, moreover, by Lemma~\ref{contained} $B\subseteq\tilde B\subseteq C(B)$. Let $\hat B$ be an affine semigroup with $B\subseteq\hat B\subseteq C(B)$ such that $K[\hat B]$ is Cohen-Macaulay; again $\hat B$ is simplicial. We show that $h_j\in \hat B$ for all $j=1,\ldots,f$ and therefore $\tilde B\subseteq\hat B$, since $e_1,\ldots,e_d\in \hat B$. By Theorem~\ref{cmchar} we know that $\hat\Gamma_1=\{\hat h_1\},\ldots,\hat\Gamma_{\hat f}=\{\hat h_{\hat f}\}$ are the equivalence classes on $\hat B_{A}$. We have 
$$
f=\#(D\cap G(B))\leq \#(D\cap G(\hat B))=\hat f,
$$
since $G(B)\subseteq G(\hat B)$. For all $j\in\{1,\ldots,f\}$ there is an $x\in B$ and an $i\in\{1,\ldots,\hat f\}$ such that $x\sim h_j$ and $x\sim\hat h_i$, that is, we may assume $h_j\sim\hat h_j$ for $j=1,\ldots,f$. Fix one $j\in\{1,\ldots,f\}$, and let $k\in\{1,\ldots,d\}$; we will show that $\lambda_k^{\hat h_{j}}\leq\lambda_k^{h_j}$. This implies $\lambda_k^{h_j}-\lambda_k^{\hat h_{j}}\in\mathbb N$, since $\lambda_k^{h_j}-\lambda_k^{\hat h_{j}}\in\mathbb Z$. Thus, $h_j-\hat h_j\in A$ and therefore $h_j\in \hat B$, since $\hat h_j\in \hat B$ and $A\subseteq \hat B$. There is an element $x\in B$ with $x\sim h_j$ and $\lambda_k^x=\lambda_k^{h_j}$. Since $x\in \hat B$, $x\sim\hat h_j$, and $\hat\Gamma_j=\{\hat h_j\}$ we get $x=\hat h_j+\sum_{i=1}^dn_ie_i$ for some $n_i\in\mathbb N$ and therefore $\lambda_k^{\hat h_{j}}\leq\lambda_k^x=\lambda_k^{h_j}$.
\end{proof}

\begin{bem}

There is an exact sequence of $\mathbb Z^n$-graded $T$-modules:
$$
0\longrightarrow K[B] \longrightarrow K[\tilde B] \longrightarrow K[\tilde B\setminus B] \longrightarrow 0.
$$
By Proposition~\ref{zerl} and Proposition~\ref{shiftszerl} we have
$$
K[\tilde B\setminus B] \cong \bigoplus\limits_{j=1}^{f} T / I_j (-h_j)
$$
as $\mathbb Z^n$-graded $T$-modules. Hence $\dim K[\tilde B\setminus B] \leq \dim K[B]-2$, since $\height I_j\geq2$; provided that $d\geq2$.

\end{bem}

\begin{bem}\label{watanabe}

Assume that $B\subseteq \mathbb N^n$ for some $n\in\mathbb N^+$. Let $B'$ be the extension of $B$ in $C(B)$ studied by Goto, Suzuki, and Watanabe in \cite{GSWSG}, or by Hoa and Trung in \cite{CMGM} (see also \cite{MMTV, PSMF}). They proved that $B=B'$ if and only if $K[B]$ is a Cohen-Macaulay ring. Since $B'=(B')'$, we get that $K[B']$ is Cohen-Macaulay, hence $\tilde B\subseteq B'$ by Theorem~\ref{unique}. Conversely, $\tilde B=\tilde B'$, since $K[\tilde B]$ is Cohen-Macaulay.  We have $B\subseteq\tilde B$, hence $B'\subseteq\tilde B'=\tilde B$ and therefore $\tilde B=B'$.

\end{bem}

\section*{Acknowledgement}

The author would like to thank J\"urgen St\"uckrad for many helpful discussions.

\end{document}